\documentclass[11pt,a4paper]{amsart}
\usepackage[T1]{fontenc}
\usepackage{lmodern}
\usepackage[margin=27mm]{geometry}
\usepackage{amsmath,amssymb,amsthm,microtype}
\usepackage[hidelinks]{hyperref}
\hypersetup{pdftitle={Counting Admissible Sets of Integers: A Proof of the Erd\H{o}s--Nicolas--S\'ark\"ozy Conjecture},
pdfauthor={Guoqing Wang and Wenkai Yang},
pdfsubject={Counting admissible sets of integers}}
\newtheorem{theorem}{Theorem}[section]
\newtheorem{lemma}[theorem]{Lemma}

\newtheorem{proposition}[theorem]{Proposition}
\theoremstyle{definition}
\newtheorem{definition}[theorem]{Definition}
\theoremstyle{remark}
\newtheorem{remark}[theorem]{Remark}
\newcommand{\FS}{\operatorname{FS}}
\newcommand{\NN}{\mathbb N}
\newcommand{\ZZ}{\mathbb Z}
\title[Counting Admissible Sets of Integers]{Counting Admissible Sets of Integers: A Proof of the Erd\H{o}s--Nicolas--S\'ark\"ozy Conjecture}
\author[Guoqing Wang]{Guoqing Wang}
\address{School of Mathematical Sciences\newline
Tiangong University\newline
Tianjin 300387, P.R. China}
\email{gqwang1979@aliyun.com}

\author[Wenkai Yang]{Wenkai Yang}
\address{Center for Combinatorics\newline
Nankai University, Tianjin 300071, P.R. China}
\email{ywk1@tju.edu.cn}
\email{9820250116@nankai.edu.cn}

\keywords{admissible sets, subset sums, small doubling,
	Erd\H{o}s--Nicolas--S\'ark\"ozy conjecture.}

\begin{document}
\begin{abstract}
A finite set of positive integers
is admissible if any two of its subsets with the same sum have the same
cardinality. 
In 1991, Erd\H{o}s, Nicolas and S\'ark\"ozy conjectured that
$\log a(N)\asymp\sqrt N$, where $a(N)$ denotes the number of nonempty
admissible subsets of $\{1,\ldots,N\}$. 
We prove this conjecture by showing that there are absolute constants
$c,C>0$ such that
$\exp(c\sqrt N)\le a(N)\le\exp(C\sqrt N)$ for every integer $N\ge4$.
The same bounds hold for the number of admissible sets with maximum $N$.
For the upper bound, we show that every sufficiently large admissible
set $A\subseteq\{1,\ldots,N\}$ contains a subset of proportional size
whose doubling constant is $O(N/|A|^2)$. 
\end{abstract}
\maketitle

\section{Introduction}

Throughout, $\NN=\{1,2,\ldots\}$, $[N]=\{1,\ldots,N\}$ for
$N\in\NN$, and all logarithms are natural. For finite sets
$X,Y\subseteq\ZZ$ and $u\in\ZZ$, we use the notation
$$X+Y=\{x+y:x\in X,\ y\in Y\},\qquad
X+u=\{x+u:x\in X\},\qquad u-X=\{u-x:x\in X\}.$$
For a finite set $A\subseteq\ZZ$, let
$$\FS(A)=\left\{\sum_{x\in B}x:B\subseteq A\right\}.$$
The empty sum $0$ is included, so $\FS(\varnothing)=\{0\}$.
For every integer $k$ with $0\le k\le |A|$, let
$$k\wedge A=\left\{\sum_{x\in B}x:B\subseteq A,\ |B|=k\right\}.$$
In particular, $0\wedge A=\{0\}$.

\begin{definition}
A finite set $A\subseteq\NN$ is \emph{admissible} if, for all $B,C\subseteq A$,
$$\sum_{x\in B}x=\sum_{x\in C}x\quad\Longrightarrow\quad |B|=|C|.$$
\end{definition}

The subsets $B,C$ in this definition may be empty or intersect; they are
not required to be distinct. Thus different subsets may have the same sum,
but they must then have the same cardinality. Equivalently, the sets
$k\wedge A$, for $0\le k\le |A|$, are pairwise disjoint. Indeed, a common
element of two layers gives equal sums represented by subsets of the two
corresponding cardinalities, and the converse follows in the same way.
Admissibility is inherited by every subset: any two subsets of a subset
of $A$ are themselves subsets of $A$. The empty set is admissible.

Define
$$a(N)=\#\{\varnothing\ne A\subseteq[N]:A\text{ is admissible}\}.$$
Erd\H{o}s, Nicolas and S\'ark\"ozy introduced this counting function on
\cite[p.~63]{ENS}. On page~64, under the unnumbered heading
\emph{Comportement de $a(N)$}, they gave lower and upper bounds of the forms
$\exp(c\sqrt N)$ and $\exp(C\sqrt N\log N)$, and conjectured
$$\log a(N)\asymp\sqrt N.$$
Here $f(N)\asymp g(N)$ means that their ratio is bounded above and below by
positive absolute constants for all sufficiently large $N$.
Our main result is the following:

\begin{theorem}\label{thm:main}
There are absolute constants $c,C>0$ such that, for every integer $N\ge4$,
$$\exp(c\sqrt N)\le a(N)\le\exp(C\sqrt N).$$
The same conclusion holds for
$$b(N)=\#\{\varnothing\ne A\subseteq[N]:
A\text{ is admissible and }\max A=N\}.$$
\end{theorem}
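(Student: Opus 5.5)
Since $b(N)\le a(N)$ and $a(N)=\sum_{M\le N}b(M)\le N\max_{M\le N}b(M)$, the plan is to prove the lower bound for $b(N)$ and the upper bound for $a(N)$; the factor $N$ is absorbed into $\exp(C\sqrt N)$.

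\emph{Lower bound.} The idea is to build admissible sets inside a short window near $N$. Let $m=\lfloor c_0\sqrt N\rfloor$ and take $A\subseteq (N-m^2/4,\,N]$ with $N\in A$. If $B\subseteq A$ has $|B|=k$, then $\sum_{x\in B}x\in\bigl(k(N-m^2/4),\,kN\bigr]$. Let $K=|A|$. The intervals for different $k\le K$ are pairwise disjoint as soon as $K m^2/4< N$, and this forces $A$ to be admissible. Taking $K\le m$ and $m^3<4N$ would allow $m\approx N^{1/3}$, but that is wasteful. Instead, take all subsets of $(N-L,N]$ with $L=\lfloor \sqrt N/2\rfloor$. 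Then $|A|\le L$, and the sums of $k$-subsets lie in $(kN-kL,\,kN]$. These are disjoint as long as $kL<N$ for all $k\le L$, i.e.\ $L^2<N$, which holds. This gives $b(N)\ge 2^{L-1}=\exp(c\sqrt N)$.

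\emph{Upper bound.} I would count admissible $A\subseteq[N]$ by their size $n=|A|$. First, since the layers $k\wedge A$ are pairwise disjoint subsets of $[0,nN]$, a classical bound gives $n=O(\sqrt N\,)$ up to log factors. More precisely, $|k\wedge A|\ge k(n-k)+1$ and the layers are disjoint, so $n^3\lesssim nN$, hence $n\lesssim\sqrt N$. If $n\le \varepsilon\sqrt N/\log N$, the trivial count $\binom{N}{n}$ is fine, so the real work is for $n\in[\sqrt N/\log N,\,C'\sqrt N]$. Here the naive bound $\binom Nn=\exp(O(\sqrt N\log N))$ loses exactly a $\log N$, which is the gap in \cite{ENS}.

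To close it, I would use the structural statement announced in the abstract: every sufficiently large admissible $A\subseteq[N]$ contains $A'\subseteq A$ with $|A'|\ge\delta|A|$ and $|A'+A'|\le K|A'|$, where $K=O(N/|A|^2)$. The heuristic behind it is that the disjoint layers $k\wedge A$ for $k\approx n/2$ are packed into length $\approx nN$, so a middle layer has size about $N$; a Balog--Szemer\'edi/Freiman style argument then extracts a subset of small doubling. Granting this, Freiman's theorem, or rather a counting lemma for sets of small doubling (the number of $S\subseteq[N]$ with $|S|=s$ and $|S+S|\le K s$ is at most $\exp(O(Ks\log K))\binom{N}{\cdot}$-type bounds, in the style of Green--Morris), shows that $A'$ lies in a progression-like structure of size $O_K(s)$. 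For $n\asymp\sqrt N$ we have $K=O(1)$, so the number of choices for $A'$ is $\exp(O(\sqrt N))$.

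The remainder $A\setminus A'$ still has linear size, so I would iterate. Remove $A'$, note that $A\setminus A'$ is admissible, and reapply the lemma while its size stays above a threshold; once it drops below $\varepsilon\sqrt N/\log N$, use the trivial count. Tracking sizes $n_j$ geometrically decreasing with $K_j=O(N/n_j^2)$, the cost at step $j$ is about $n_j\log(N/n_j^2)$. Summing over $j$ gives $O(\sqrt N)$, since $\sum_j n_j\log(N/n_j^2)$ with $n_j\le(1-\delta)^j\sqrt N$ converges to $O(\sqrt N)$. The main obstacle is proving the doubling lemma with the sharp dependence $K=O(N/|A|^2)$, and pairing it with a small-doubling container count that gives $\exp(O(s\log K))$ rather than $\exp(O(s\log N))$. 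I would attack the lemma first via the Pl\"unnecke--Ruzsa and Balog--Szemer\'edi--Gowers route applied to the middle layers.
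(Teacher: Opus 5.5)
Your lower bound and the architecture of your upper bound agree with the paper. The upper-bound architecture is: a size bound from disjoint layers, iterated removal of proportional blocks with doubling $O(N/|A|^2)$, a Campos-type count uniform in growing $K$, geometric summation of $n_j\log(N/n_j^2)$, and a trivial count of a small remainder. One small fix in the lower bound: with $L=\lfloor\sqrt N/2\rfloor$ you get $L=1$ for $N<16$, so $2^{L-1}=1$. Either take $L=\lfloor\sqrt N\rfloor$ and check $(k+1)(N-L+1)>kN$ for $k<L$, as the paper does, or use $b(N)\ge 2$ and shrink $c$. The real problem is that the upper bound is not proved. The structural lemma you ``grant'' is the whole content of the argument, and the route you sketch for it misses exactly the factor that matters.

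Averaging over the disjoint layers only tells you that some middle layer has $|k\wedge A|\lesssim N$. Natural ways of converting this into doubling only give $|B+B|=O(N)$, hence doubling $O(N/m)\asymp\sqrt N$ when $m\asymp\sqrt N$. One such way is $t+(2\wedge B)\subseteq k\wedge A$ with $B$ the $\lfloor m/2\rfloor$ smallest elements. Another is the disjoint-translates argument applied to $\FS(A)$ using $|\FS(A)|\le mN+1$. Doubling $\asymp\sqrt N$ returns precisely the old bound $\exp(O(\sqrt N\log N))$. You also do not say what additive-energy input Balog--Szemer\'edi--Gowers would be fed, and a single small layer does not obviously supply one.

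What is needed is a set of size $\asymp m$ whose \emph{entire} subset-sum set, over all cardinalities at once, has size $O(N)$. This is a gain of a factor $m$. The paper obtains it as follows.
\begin{itemize}
\item Let $A_i$ be the $i$ smallest elements and $\Delta_i=|\FS(A_{i+1})|-|\FS(A_i)|$. Telescoping gives some $i\ge m/2$ with $\Delta_i\le 3N$.
\item Admissibility of $A_{i+1}$ makes the cardinality function $h$ on $S=\FS(A_i)$ satisfy $h(s+u)=h(s)+1$, where $u=a_{i+1}$. So $\phi(s)=s-uh(s)$ is constant on each of the $\Delta_i$ chains $s,s+u,\ldots$ in $S$.
\item Since $\phi(S)=-\FS(u-A_i)$, this gives $|\FS(D)|\le 3N$ for the reflected prefix $D=u-A_i$.
\item Let $E$ be the $b=\lfloor i/2\rfloor$ smallest elements of $D$. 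The $\asymp i$ translates $(d_{b+1}+\cdots+d_{b+2j})+(2\wedge E)$ lie in $\FS(D)$ and are disjoint by ordering. Hence $|E+E|\le 12N/i+b=O(N/m)$ with $|E|\asymp m$.
\item The reflection $u-E\subseteq A$ has the same doubling.
\end{itemize}
Without this reflection-and-chain step, or a substitute achieving the same gain of a factor $|A|$, your iteration has no block to extract with the required doubling, and the upper bound does not follow.
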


The extremal problem of maximizing $|A|$ and the problem of counting all
admissible sets are different. Deshouillers and Freiman resolved the former
for all sufficiently large $N$ \cite{DF}. Their theorem is not needed below.
Nicolas subsequently proved that the number of \emph{stratified} sets with
maximum $N$ equals the partition function $p(N)$ \cite{Nicolas}. A stratified
set has every $k$-term sum smaller than every $(k+1)$-term sum. Such sets are
admissible, so his result strengthened the known lower bound. On page~213,
he recorded that the upper bound still contained the extra factor $\log N$
in the exponent. In that paper his notation $a(N)$ denotes our $b(N)$.
He also suggested the stronger asymptotic formula
$$\log b(N)=\left(\pi\sqrt{\frac23}+o(1)\right)\sqrt N,$$
and possibly $b(N)\sim p(N)$ \cite[p.~213]{Nicolas}.
Neither assertion follows from Theorem~\ref{thm:main}.

Our proof removes this extra logarithm using a counting theorem of Campos
\cite{Campos}. The theorem permits a doubling constant that increases with
the set size, which is useful during repeated extraction. Earlier work of
Green and Morris \cite{GM} treated fixed doubling; subsequent work of Liu,
Mattos and Szab\'o \cite{LMS} gives stronger estimates. The explicit version
of Campos's theorem already suffices here.

The elementary part of the proof has two steps. First, admissibility gives a
well-defined cardinality function on subset sums. Its behavior along
arithmetic chains makes a reflected prefix have few subset sums. Second,
disjoint translates of its two-term sums produce a large subset with
controlled doubling. The counting argument then removes such subsets until
only a small remainder is left.

\section{Auxiliary results}

\begin{lemma}\label{lem:size}
If $A\subseteq[N]$ is nonempty and admissible, and $m=|A|$, then
$$m^2+5\le6N.$$
In particular, $m\le\sqrt{6N}$.
\end{lemma}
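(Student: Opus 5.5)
Since the layers $k\wedge A$, $0\le k\le m$, are pairwise disjoint subsets of $\ZZ$, the plan is to bound the sum of their sizes from below and compare it with the length of the interval that contains all of them.

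For the lower bound, write $A=\{a_1<\dots<a_m\}$. The standard fact $|k\wedge A|\ge k(m-k)+1$ holds for any set of $m$ distinct integers. To see it, start from the smallest $k$-sum $a_1+\dots+a_k$. Repeatedly replace one element by the next larger unused one, moving the top element of the chosen set up to $a_m$, then the next one up to $a_{m-1}$, and so on. Each step strictly increases the sum, and there are $k(m-k)$ steps, which gives $k(m-k)+1$ distinct values. Summing over $k$ gives
$$\sum_{k=0}^{m}|k\wedge A|\ \ge\ (m+1)+\sum_{k=0}^m k(m-k)=(m+1)+\frac{(m-1)m(m+1)}{6}.$$

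For the upper bound, all subset sums lie in $[0,\sigma]$, where $\sigma=\sum A\le mN-\binom m2$, since the $a_i$ are distinct elements of $[N]$. Disjointness of the layers gives
$$\sum_k|k\wedge A|\le \sigma+1\le mN-\binom m2+1.$$
Combining the two bounds gives $m+\frac{(m-1)m(m+1)}6+\frac{m(m-1)}2\le mN$. Dividing by $m$ gives $1+\frac{m^2-1}{6}+\frac{m-1}2\le N$, that is, $m^2+3m+2\le 6N$. For $m\ge1$ this yields $m^2+5\le 6N$, as claimed.

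If the constants did not come out right, there is a sharper option. The layer $k\wedge A$ lies in $[s_k,S_k]$, with $s_k$ the sum of the $k$ smallest and $S_k$ the sum of the $k$ largest elements, and these intervals nearly nest. So one can also use symmetry, since $k\wedge A$ and $(m-k)\wedge A$ are reflections via $u-X$ with $u=\sigma$. I expect no real obstacle. The only delicate point is the bookkeeping of the additive constant: the lower bound must keep the $+1$ per layer, and the upper bound must use $\sigma\le mN-\binom m2$ rather than $mN$. With both, the inequality $m^2+3m+2\le6N$ already implies the statement, with room to spare.
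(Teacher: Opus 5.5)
Your proof is correct and follows the paper's argument: you prove $|k\wedge A|\ge k(m-k)+1$ by index shifting, sum over $k$, and use disjointness of the layers to compare with the number of possible subset sums. Your refinement $\sigma\le mN-\binom m2$ gives the slightly stronger $m^2+3m+2\le 6N$, but contrary to your closing remark it is not needed, since the paper uses only $\FS(A)\subseteq\{0,1,\ldots,mN\}$, which already gives exactly $N\ge 1+(m^2-1)/6=(m^2+5)/6$.
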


\begin{proof}
For every $m$-element set of distinct integers and every integer $0\le k\le m$,
$$|k\wedge A|\ge k(m-k)+1.$$
For $1\le k\le m-1$, list $A=\{a_1<\cdots<a_m\}$ and start with
the index tuple $(1,\ldots,k)$. For $j=k,k-1,\ldots,1$, increase its
$j$th index one unit at a time from $j$ to $m-k+j$. At each move the
indices remain strictly increasing: indices to the right have already
reached their final values, and indices to the left have not yet moved.
Each of the $k$ indices makes $m-k$ moves. Since each move strictly
increases the corresponding sum, the initial sum and the $k(m-k)$ later
sums are distinct. For $k=0$ and $k=m$ there is exactly one sum.

For admissible $A$, the layers $k\wedge A$ are pairwise disjoint and
their union is $\FS(A)$. Every subset of $A\subseteq[N]$ contains at
most $m$ positive integers, each at most $N$. Hence
$\FS(A)\subseteq\{0,1,\ldots,mN\}$, a set of $mN+1$ integers. Therefore
$$mN+1\ge\sum_{k=0}^m\bigl(k(m-k)+1\bigr)
=\frac{m(m^2-1)}6+m+1.$$
After subtracting $1$ and dividing by $m>0$, we obtain
$N\ge(m^2+5)/6$.
\end{proof}

\begin{lemma}\label{lem:chains}
Let $P$ be a nonempty finite set of positive integers, and let $u$ be an
integer with $u>\max P$. Suppose
$P\cup\{u\}$ is admissible. Set $S=\FS(P)$ and $D=u-P$. Then
$$|\FS(D)|\le |(S+u)\setminus S|.$$
\end{lemma}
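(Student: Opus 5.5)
The inequality compares $|\FS(D)|$ with the number of maximal arithmetic progressions of difference $u$ inside $S$. These progressions are counted by $(S+u)\setminus S$. The plan is to build a surjection from these progressions ("chains") onto $\FS(D)$, using admissibility of $P\cup\{u\}$ to get a well-defined cardinality function on subset sums.

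\emph{Step 1: cardinality function.} By admissibility of $P\cup\{u\}$, every $s\in\FS(P\cup\{u\})$ has a well-defined $\kappa(s)$: the common cardinality of all subsets of $P\cup\{u\}$ with sum $s$. In particular, for $s=\sum_{x\in B}x\in S$ with $B\subseteq P$ we have $\kappa(s)=|B|$.

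The key observation is the following. If $s\in S$ and $s+u\in S$, then $\kappa(s+u)=\kappa(s)+1$. Indeed, write $s=\sum_{x\in B}x$ with $B\subseteq P$. Since $u\notin P$, the number $s+u$ is the sum of $B\cup\{u\}$, which has $|B|+1$ elements. It is also the sum of some $B'\subseteq P$, so admissibility forces $|B'|=|B|+1$.

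\emph{Step 2: chains.} Partition $S$ into maximal chains $\{s,s+u,\ldots,s+ju\}\subseteq S$ with $s-u\notin S$ and $s+(j+1)u\notin S$. Each chain has a unique top element $s'$, namely the one with $s'+u\notin S$. The map $s'\mapsto s'+u$ is a bijection from the set of tops onto $(S+u)\setminus S$. Hence the number of chains equals $|(S+u)\setminus S|$.

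\emph{Step 3: surjection onto $\FS(D)$.} Define $\Phi(s)=\kappa(s)u-s$ for $s\in S$. By Step 1, $\Phi(s+u)=(\kappa(s)+1)u-s-u=\Phi(s)$ whenever $s,s+u\in S$, so $\Phi$ is constant on each chain. It therefore induces a map from chains to $\mathbb Z$.

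Every element of $\FS(D)$ has the form $\sum_{x\in B}(u-x)=|B|u-\sum_{x\in B}x=\Phi\bigl(\sum_{x\in B}x\bigr)$ for some $B\subseteq P$. So the image of the chains under $\Phi$ contains all of $\FS(D)$. (In fact the image equals $\FS(D)$, but containment suffices.) Consequently
$$|\FS(D)|\le\#\{\text{chains}\}=|(S+u)\setminus S|.$$

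\emph{Main obstacle.} The delicate point is the direction of the map. A direct map from $\FS(D)$ to chains is awkward, because two subsets $B,B'\subseteq P$ giving the same element of $\FS(D)$ have sums differing by a multiple of $u$, with no guarantee that the intermediate values lie in $S$. Mapping chains onto $\FS(D)$ avoids this issue. It only needs the local increment property of $\kappa$ from Step 1, which is exactly where admissibility and $u\notin P$ enter; the hypothesis $u>\max P$ guarantees $u\notin P$.
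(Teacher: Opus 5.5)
Your proof is correct and follows essentially the same route as the paper. Both arguments use a cardinality function on subset sums that increases by one along steps of $u$, the invariant $\kappa(s)u-s$ (the paper uses its negative $s-uh(s)$) that is constant on maximal $u$-chains, and a count of chains via their tops equal to $|(S+u)\setminus S|$; the only cosmetic difference is that you define $\kappa$ on $\FS(P\cup\{u\})$ rather than on $S$ alone.
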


\begin{proof}
For each $s\in S$, let $h(s)$ be the cardinality of a subset of $P$ with sum
$s$. Such a subset exists by the definition of $S$, and its cardinality
is uniquely determined because $P$ is admissible.
If $s,s+u\in S$, choose $B,C\subseteq P$ with sums $s,s+u$, respectively.
Since $u\notin P$, the subsets $B\cup\{u\}$ and $C$ of $P\cup\{u\}$
have equal sums. Admissibility of $P\cup\{u\}$ gives
$$h(s+u)=|C|=|B|+1=h(s)+1.$$
For $s\in S$, define $\phi(s)=s-uh(s)$. The preceding equality gives
$$\phi(s+u)=s+u-u(h(s)+1)=\phi(s)$$
whenever $s,s+u\in S$.

Join $s$ to $s+u$ whenever both belong to $S$. Every element has at most
one predecessor and at most one successor. Since $u>0$ and $S$ is finite,
the connected components are finite arithmetic chains, including possible
singletons. The function $\phi$ is constant on each such chain.
Each chain has a unique largest element, and adding $u$ to that element
gives a value outside $S$. Translation by $u$ maps these largest elements
injectively into $(S+u)\setminus S$. Conversely, for every
$s\in(S+u)\setminus S$, the element $s-u$ is the largest element of
its chain. Thus the number of chains is $|(S+u)\setminus S|$.

For any $B\subseteq P$, with $s=\sum_{x\in B}x$, we have
$$\phi(s)=\sum_{x\in B}x-u|B|=-\sum_{x\in B}(u-x).$$
The map $x\mapsto u-x$ is a bijection from $P$ to $D$ and therefore
induces a bijection between their subsets. The displayed equality proves
both inclusions in $\phi(S)=-\FS(D)$, where
$-\FS(D)=\{-v:v\in\FS(D)\}$. Different chains need not have different
$\phi$ values, but each chain contributes at most one value. Consequently
$$|\FS(D)|=|\phi(S)|\le |(S+u)\setminus S|.$$
\end{proof}

\begin{lemma}\label{lem:translates}
Let $D$ be a set of $i\ge4$ distinct positive integers, and let $B$
consist of its $b=\lfloor i/2\rfloor$ smallest elements. Then
$$|B+B|\le\frac{4|\FS(D)|}{i}+b.$$
\end{lemma}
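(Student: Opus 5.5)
The plan is to split $B+B$ into its ``diagonal'' part and its ``off-diagonal'' part, and to pack many pairwise disjoint translates of the off-diagonal part into $\FS(D)$. Write $D=\{d_1<\cdots<d_i\}$, so $B=\{d_1,\ldots,d_b\}$. Then
$$B+B=(2\wedge B)\cup\{2d_j:1\le j\le b\},$$
so $|B+B|\le|2\wedge B|+b$. It therefore suffices to prove
$$|2\wedge B|\le\frac{4|\FS(D)|}{i}.$$
For this I will exhibit at least $i/4$ pairwise disjoint translates of $2\wedge B$ inside $\FS(D)$. (If $b=1$ then $2\wedge B=\varnothing$, but $b\ge2$ anyway since $i\ge4$.)

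The translates come from sums of the largest elements of $D$, which are disjoint from $B$. For $0\le k\le i-b$, let $t_k=d_i+d_{i-1}+\cdots+d_{i-k+1}$, with $t_0=0$. Each element of $2\wedge B+t_k$ is the sum of two elements of $B$ and $k$ elements with indices $>b$. These are $k+2$ distinct elements of $D$, so $2\wedge B+t_k\subseteq\FS(D)$.

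Disjointness holds whenever the shifts differ by at least two large elements. The diameter of $2\wedge B$ is
$$(d_{b-1}+d_b)-(d_1+d_2)<d_{b-1}+d_b.$$
If $k'\ge k+2$ with $k'\le i-b$, then $t_{k'}-t_k\ge d_{i-k}+d_{i-k-1}$, and both indices exceed $b$. Hence $t_{k'}-t_k>d_b+d_{b-1}$, which exceeds the diameter, so $2\wedge B+t_k$ and $2\wedge B+t_{k'}$ are disjoint.

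Now choose $k\in\{0,2,4,\ldots\}$ up to $i-b$, and replace the last even value by $k=i-b$ itself if that keeps gaps $\ge2$. More precisely, use the $\lfloor (i-b)/2\rfloor+1$ values $0,2,\ldots,2\lfloor(i-b)/2\rfloor-2$ together with $i-b$. The gap between the last two is at least $2$.

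The main obstacle is making this count reach $i/4$, and the checking is by residue of $i$ mod $4$. We need $\lfloor \lceil i/2\rceil/2\rfloor+1\ge i/4$:
\begin{itemize}
\item $i=4q$ gives $q+1\ge q$;
\item $i=4q+1$ gives $q+1\ge q+\tfrac14$;
\item $i=4q+2$ gives $q+1\ge q+\tfrac12$;
\item $i=4q+3$ gives $q+2\ge q+\tfrac34$.
\end{itemize}
The naive choice of only even $k\le i-b-2$ fails for $i\equiv1\pmod 4$; using the full top sum $t_{i-b}$ as the last shift is exactly what rescues it. Summing over the disjoint translates gives $|\FS(D)|\ge\frac i4|2\wedge B|$, which yields the lemma.
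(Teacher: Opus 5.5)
Your proof is correct and is essentially the paper's argument: both pack $\lfloor(i-b)/2\rfloor+1$ pairwise disjoint translates of $2\wedge B$ into $\FS(D)$ by adding elements of $D\setminus B$ two at a time, then use $B+B=(2\wedge B)\cup\{2x:x\in B\}$; the only cosmetic difference is that the paper shifts by $d_{b+1}+\cdots+d_{b+2j}$ rather than by sums of the largest elements. Replacing the last even shift by $i-b$ is unnecessary, because the even values $k=0,2,\ldots,2\lfloor(i-b)/2\rfloor$ already give the same number of translates, so your side remark about $t_{i-b}$ ``rescuing'' the count is misleading (the naive choice fails only because it stops at $i-b-2$, and it fails for $i\equiv2\pmod 4$ as well); the residue check can also be replaced by $\lfloor(i-b)/2\rfloor+1>(i-b)/2\ge i/4$.
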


\begin{proof}
Let $D=\{d_1<\cdots<d_i\}$, so $B=\{d_1,\ldots,d_b\}$.
Since $b\ge2$, the set $2\wedge B$ is nonempty.
For $0\le j\le\lfloor(i-b)/2\rfloor$, consider
$$(d_{b+1}+\cdots+d_{b+2j})+(2\wedge B),$$
where the sum is $0$ when $j=0$. Each of these sets is contained
in $\FS(D)$: its elements are sums of two distinct elements of $B$
and the distinct elements $d_{b+1},\ldots,d_{b+2j}$ of $D\setminus B$.

Positivity and distinctness give
$\min(2\wedge B)>0$ and $\max(2\wedge B)<2d_b$.
For $0\le j<\lfloor(i-b)/2\rfloor$, we have
$$\max(2\wedge B)<2d_b
<d_{b+2j+1}+d_{b+2j+2}+\min(2\wedge B).$$
Adding $d_{b+1}+\cdots+d_{b+2j}$ shows that the largest element
of the $j$th translate is smaller than the smallest element
of the next translate. Thus the translates are pairwise disjoint.
Since $i-b\ge i/2$, it follows that
$$\frac{i}{4}|2\wedge B|
\le\left(\left\lfloor\frac{i-b}{2}\right\rfloor+1\right)|2\wedge B|
\le|\FS(D)|.$$
Finally, $B+B=(2\wedge B)\cup\{2x:x\in B\}$, and hence
\[
|B+B|\le|2\wedge B|+b
\le\frac{4|\FS(D)|}{i}+b.\qedhere
\]
\end{proof}

\begin{proposition}\label{prop:block}
Let $A\subseteq[N]$ be admissible, with $m=|A|\ge10$. There is a subset
$B\subseteq A$, of size $b$, such that
$$\frac m5\le b\le\frac m2,
\qquad |B+B|\le\frac{256N}{m^2}\,b.$$
\end{proposition}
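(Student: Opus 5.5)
The plan is to find a prefix of $A$ whose reflection has few subset sums, by combining Lemma~\ref{lem:chains} with a telescoping count, and then to apply Lemma~\ref{lem:translates} to that reflection. Write $A=\{a_1<\cdots<a_m\}$, $A_j=\{a_1,\ldots,a_j\}$ and $S_j=\FS(A_j)$.

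\emph{Step 1: a cheap prefix.} Since $\FS(A_{j+1})=S_j\cup(S_j+a_{j+1})$, we have
$$|S_{j+1}|-|S_j|=|(S_j+a_{j+1})\setminus S_j|.$$
These increments are nonnegative and telescope to at most $|\FS(A)|\le mN+1$, because $\FS(A)\subseteq\{0,\ldots,mN\}$. Restrict to indices $j$ with $\lceil m/2\rceil\le j\le m-1$. There are at least $m/2-1$ of them, which is at least $2m/5$ for $m\ge10$. Averaging gives some such $j$ with
$$|(S_j+a_{j+1})\setminus S_j|\le\frac{mN+1}{2m/5}\le 3N,$$
say, for $N$ and $m$ in range; the exact constant is unimportant.

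\emph{Step 2: reflect.} Set $P=A_j$, $u=a_{j+1}>\max P$, $i=j$ and $D=u-P$. The set $P\cup\{u\}\subseteq A$ is admissible, so Lemma~\ref{lem:chains} gives $|\FS(D)|\le 3N$. The set $D$ consists of $i$ distinct positive integers, with $i\ge m/2\ge5\ge4$.

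\emph{Step 3: extract the block.} Apply Lemma~\ref{lem:translates} to $D$, obtaining the set of the $b=\lfloor i/2\rfloor$ smallest elements of $D$. These are of the form $u-x$ for $x$ in a $b$-element set $B\subseteq P\subseteq A$. Reflection preserves sumset size, since $(u-B)+(u-B)=2u-(B+B)$. Hence
$$|B+B|\le\frac{4\cdot 3N}{i}+b\le\frac{24N}{m}+b.$$
For the size, $b\le i/2\le m/2$. Also $b\ge (i-1)/2\ge (m/2-1)/2\ge m/5$ when $m\ge10$.

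\emph{Step 4: the constant.} We need both error terms to be dominated by $256Nb/m^2$. Since $b\ge m/5$, we have $256Nb/m^2\ge 51N/m$, which covers $24N/m$ with room to spare. For the remaining $+b$ term we invoke Lemma~\ref{lem:size}: $m^2\le 6N$ gives $b\le 6Nb/m^2$. Adding the two bounds leaves ample slack below $256Nb/m^2$.

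The main obstacle is Step 1. One must see that the increments $|(S_j+a_{j+1})\setminus S_j|$ telescope to the total number of subset sums, and that averaging over the upper half of the indices keeps $i$ proportional to $m$ while making the increment $O(N)$. Once that is in place, Steps 2--4 are routine bookkeeping with the lemmas already proved.
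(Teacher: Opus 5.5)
Your proposal is correct and follows essentially the same route as the paper: you average the increments $|(S_j+a_{j+1})\setminus S_j|$ over the upper half of the indices to find a prefix with increment at most $3N$, apply Lemma~\ref{lem:chains} to the reflection $D=a_{j+1}-A_j$, then apply Lemma~\ref{lem:translates} and reflect back. The differences are only in the final constant check. In Step~4 you absorb the $+b$ term via $b\le 6Nb/m^2$, which gives $24N/m+b\le 126Nb/m^2$. The paper instead uses $b\le m/2\le 3N/m$ to get $27N/m<256N/(5m)$.
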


\begin{proof}
Let $A=\{a_1<\cdots<a_m\}$ and $A_i=\{a_1,\ldots,a_i\}$ for
$0\le i\le m$, with $A_0=\varnothing$. Denote $\FS(A_i)$ by $S_i$. Every subset of $A_{i+1}$ either avoids $a_{i+1}$
or consists of a subset of $A_i$ together with $a_{i+1}$. Consequently
$$S_{i+1}=S_i\cup(S_i+a_{i+1})\qquad(0\le i<m).$$
This identity does not require admissibility; the two sets on its right
need not be disjoint. Define
$$\Delta_i=|S_{i+1}|-|S_i|
=|(S_i+a_{i+1})\setminus S_i|\ge0.$$
Since $S_0=\{0\}$ and $S_m\subseteq\{0,1,\ldots,mN\}$,
$$\sum_{i=0}^{m-1}\Delta_i=|S_m|-1\le mN.$$
There are $\lfloor m/2\rfloor$ indices from $\lceil m/2\rceil$ to
$m-1$. Nonnegativity of the increments gives
$$\sum_{i=\lceil m/2\rceil}^{m-1}\Delta_i\le mN.$$
Their average is therefore at most $mN/\lfloor m/2\rfloor\le3N$.
Choose one such $i$ with $\Delta_i\le3N$, and set $u=a_{i+1}$ and
$D=u-A_i$. We have $u>\max A_i$, and
$A_i\cup\{u\}=A_{i+1}\subseteq A$ is admissible. Thus
Lemma~\ref{lem:chains} applies and gives
$$|\FS(D)|\le\Delta_i\le3N.$$

The set $D$ consists of $i\ge\lceil m/2\rceil\ge5$ distinct positive
integers. Let $E$ be its $b=\lfloor i/2\rfloor$ smallest elements.
Lemma~\ref{lem:translates} applies and yields
$$|E+E|\le\frac{12N}{i}+b
\le\frac{24N}{m}+\frac m2
\le\frac{27N}{m},$$
where the last step uses $m^2\le6N$. Moreover,
$$b\ge\frac i2-\frac12\ge\frac m4-\frac12\ge\frac m5,
\qquad b\le\frac m2.$$
The reflection $B=u-E$ is a subset of $A_i$, and
$B+B=2u-(E+E)$. Thus $|B+B|=|E+E|$. Since
$$\frac{256N}{m^2}\,b\ge\frac{256N}{5m}>\frac{27N}{m},$$
the proposition follows.
\end{proof}

\begin{remark}
Lemma~\ref{lem:translates} does not require $D$ to be admissible.
The subset $B\subseteq A$ obtained in Proposition~\ref{prop:block}
is admissible because $A$ is admissible.
\end{remark}

We shall use the following specialization of Campos's counting theorem
to $\ZZ$ \cite{Campos}. Since the only finite subgroup of $\ZZ$
is trivial, the subgroup parameter in that theorem equals $1$.

\begin{theorem}[Campos]\label{thm:Campos}
Let $n\ge2$ and $s\ge1$ be integers, and let $K$ be a real number satisfying
$$2<K<2^{-36}\frac{s}{(\log n)^3}.$$
The number of sets $J\subseteq[n]$ satisfying $|J|=s$ and $|J+J|\le Ks$
is at most
$$\exp\!\left(2^9\lambda K^{1/6}s^{5/6}\sqrt{\log n}\right)
\binom{(Ks+1)/2}{s},
\qquad \lambda=\min\left\{\frac K{K-2},\log s\right\}.$$
Here, for a real number $v>s-1$, the generalized binomial coefficient is
$\binom vs=v(v-1)\cdots(v-s+1)/s!$. Each factor in the numerator is
positive and increasing in $v$, so this expression is increasing in $v$.
It may therefore be replaced by the upper bound $\binom{\lceil v\rceil}{s}$.
\end{theorem}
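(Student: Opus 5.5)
The statement is a specialization of a published result rather than a new claim, so I would prove it by quoting Campos's counting theorem \cite{Campos} and checking that its hypotheses and conclusion reduce to the displayed form. Campos's theorem is stated for subsets of an abelian group, or of an interval of integers. It bounds the number of $s$-element sets $J$ with $|J+J|\le K|J|$, and its main term is a binomial coefficient whose top argument is $(Ks+d)/2$, or $(Ks+1)/2$ scaled by the size of a finite subgroup. First I would recall the exact form of that theorem, with its constants $2^{-36}$ in the range of $K$ and $2^9$ in the exponent, and its definition of $\lambda=\min\{K/(K-2),\log s\}$. Then I would instantiate it with ambient interval $[n]\subseteq\ZZ$.

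The key point is the subgroup parameter. In the general statement it is the order of a finite subgroup that can appear in the structure of sets of doubling close to $2$. In $\ZZ$ the only finite subgroup is $\{0\}$, so this parameter equals $1$, and the main term becomes $\binom{(Ks+1)/2}{s}$. I would also check that the hypothesis $2<K<2^{-36}s/(\log n)^3$ is exactly the admissible range in the source, and that $n\ge2$ makes $\log n>0$. If the source's range or constants are slightly different, I would take the weaker of the two so that the quoted version is implied.

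Finally, I would verify the remarks about the generalized binomial coefficient. Since $K>2$, we have $v=(Ks+1)/2>s+\tfrac12>s-1$. Hence each factor $v-j$ with $0\le j\le s-1$ is positive. Each factor is increasing in $v$, so the product is increasing in $v$. Because $\lceil v\rceil\ge v$, replacing $v$ by $\lceil v\rceil$ only enlarges the bound, and the result is an ordinary binomial coefficient.

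The only real obstacle is bookkeeping: matching the constants and the precise form of $\lambda$ with the cited source. I would handle this by quoting the source verbatim and noting that any discrepancy in constants is harmless, because only the shape $\exp(O(K^{1/6}s^{5/6}\sqrt{\log n}\,\lambda))\binom{(Ks+1)/2}{s}$ is used later.
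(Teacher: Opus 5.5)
Your proposal matches the paper, which also just quotes Campos's theorem for $\ZZ$ with the subgroup parameter equal to $1$, since $\ZZ$ has no nontrivial finite subgroup. Your check that $v=(Ks+1)/2>s+\tfrac12$ makes every factor positive and increasing is the same justification the paper gives for replacing $v$ by $\lceil v\rceil$. One caveat: your fallback of weakening the constants after a mismatch would prove a different statement, not the displayed one. As you note, though, Lemma~\ref{lem:countblock} would survive any absolute constants after enlarging $N_0$.
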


Only the case $K>4$ is used below.

\begin{lemma}\label{lem:countblock}
There is an absolute integer $N_0\ge4$ with the following property.
Let $N\ge N_0$ and $m,b$ be positive integers satisfying
$$N^{2/5}<m\le\sqrt{6N},\qquad
\frac m5\le b\le\frac m2,$$
and set $K=256N/m^2$. Then the number of $b$-element subsets
$B\subseteq[N]$ with $|B+B|\le Kb$
is at most $\exp(f_N(m))$, where
$$f_N(x)=x\log\!\left(\frac{256e^2N}{x^2}\right)\qquad(x>0).$$
\end{lemma}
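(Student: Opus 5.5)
Apply Theorem~\ref{thm:Campos} with $n=N$, $s=b$ and $K=256N/m^2$, and then bound the two factors of its estimate separately. The plan has three steps: verify the hypotheses, bound the binomial coefficient by something of size roughly $\exp(b\log(eK))$, and show that the exponential error factor is $o(m)$.

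\emph{Hypotheses.} Since $m\le\sqrt{6N}$, we get $K\ge256/6>4$, so $K>2$ and moreover $\lambda\le K/(K-2)\le2$. For the upper constraint on $K$, note that $b\ge m/5>N^{2/5}/5$ and $K\le256N/m^2<256N^{1/5}$. Hence
$$\frac{K(\log N)^3}{b}\le\frac{5\cdot256N(\log N)^3}{m^3}<1280\,N^{-1/5}(\log N)^3.$$
This is below $2^{-36}$ once $N\ge N_0$. Here the lower bound $m>N^{2/5}$ is essential: it makes $m^3$ beat $N$ with room to spare.

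\emph{Main term.} By the monotonicity remark in Theorem~\ref{thm:Campos}, use $\binom{v}{b}\le v^b/b!\le(ev/b)^b$ with $v=(Kb+1)/2\le Kb$. This gives a bound of $(eK)^b=\exp(b\log(eK))$. Next compare this with $f_N(m)=m\log(e^2K)$. Because $b\le m/2$ and $\log(eK)>0$, we have $b\log(eK)\le\tfrac m2\log(eK)$. This leaves a slack of at least $\tfrac m2\log(eK)+m\ge m$, which will absorb the error term.

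\emph{Error term (main obstacle).} We need
$$2^{10}K^{1/6}b^{5/6}\sqrt{\log N}\le m$$
for large $N$. Using $b\le m$ and $K=256N/m^2$, the left side is at most $2^{10}\,256^{1/6}\,N^{1/6}m^{1/2}\sqrt{\log N}$. This is $\le m$ iff $m^{1/2}\ge C N^{1/6}\sqrt{\log N}$, that is, $m\ge C^2N^{1/3}\log N$. That follows from $m>N^{2/5}$ for $N\ge N_0$, since $N^{2/5}$ dominates $N^{1/3}\log N$. This balancing step is the delicate one. The exponents must work out so that the Campos error $K^{1/6}b^{5/6}$ is genuinely of lower order than $m$ throughout the range $N^{2/5}<m\le\sqrt{6N}$. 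We then choose $N_0$ to satisfy both this and the hypothesis check.

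Combining the three steps, the count is at most
$$\exp\!\Bigl(\tfrac m2\log(eK)+m\Bigr)\le\exp\!\bigl(m\log(e^2K)\bigr)=\exp(f_N(m)),$$
as claimed.
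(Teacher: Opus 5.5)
Your proof is correct and follows the paper's argument: apply Campos's theorem with $n=N$, $s=b$, bound the binomial factor by $(eK)^b$, and show that the exponential factor is negligible. The only difference is cosmetic. The paper bounds the exponential factor by $e^b$, noting that its exponent divided by $b$ is $1024\,(K(\log N)^3/b)^{1/6}$, so choosing the threshold $2^{-60}$ in the hypothesis check also controls the error. You instead bound it by $e^m$ through the separate comparison $N^{2/5}\gg N^{1/3}\log N$, and then use $b\le m/2$ to absorb it.
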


\begin{proof}
We have $K\ge256/6>4$, and
$$\frac{K(\log N)^3}{b}
\le\frac{1280N(\log N)^3}{m^3}
\le1280N^{-1/5}(\log N)^3\longrightarrow0.$$
Choose an absolute integer $N_0\ge4$ so large that
$$1280N^{-1/5}(\log N)^3\le2^{-60}\qquad(N\ge N_0).$$
This is possible by the preceding limit, and this choice does not depend
on $m$ or $b$. It gives
$K(\log N)^3/b\le2^{-60}<2^{-36}$, so Theorem~\ref{thm:Campos}
applies with $n=N$ and $s=b$. Also $\lambda\le K/(K-2)\le2$.
The exponent in Campos's exponential factor, divided by $b$, is at most
$$1024\left(\frac Kb\right)^{1/6}\sqrt{\log N}
=1024\left(\frac{K(\log N)^3}{b}\right)^{1/6}\le1.$$
Thus the exponential factor is at most $e^b$ for every allowed $m,b$.
Since $K\ge4$ and $b\ge1$,
$$\left\lceil\frac{Kb+1}{2}\right\rceil
\le\frac{Kb+3}{2}\le Kb.$$
The usual bound
$\binom rs\le(er/s)^s$ gives
$$\binom{(Kb+1)/2}{b}
\le\binom{\lceil(Kb+1)/2\rceil}{b}
\le(eK)^b.$$
Consequently the number of choices is at most
$$e^b(eK)^b=(e^2K)^b
\le\exp\!\left(m\log\!\left(\frac{256e^2N}{m^2}\right)\right).$$
The final inequality uses $b\le m$ and $e^2K>1$.
\end{proof}

\section{Proof of the theorem~\ref{thm:main}}

\begin{proof}[Proof of the upper bound]
	
It is enough to prove the bound for all sufficiently large $N$; the
finite remaining range will be treated at the end. Increase the absolute
threshold $N_0$ in Lemma~\ref{lem:countblock}, if necessary, so that
$10\le\lceil N^{2/5}\rceil\le\sqrt{6N}\le N$ for every $N\ge N_0$.
Fix such an integer $N$. Let $R=\lceil N^{2/5}\rceil$ and $q=4/5$.
Fix a total order on the subsets of $[N]$. Starting with an admissible
$A=A^{(0)}\subseteq[N]$, repeat the following procedure while
$m_j=|A^{(j)}|>R$. Choose the first block $B_j\subseteq A^{(j)}$ in
this order satisfying Proposition~\ref{prop:block}, and define
$A^{(j+1)}=A^{(j)}\setminus B_j$. Each $A^{(j)}$ is a subset of $A$
and is therefore admissible. Since $m_j>R\ge10$, the proposition applies
at every extraction stage.

For $b_j=|B_j|$, Proposition~\ref{prop:block} gives
$$\frac{m_j}{5}\le b_j\le\frac{m_j}{2},
\qquad m_{j+1}\le q m_j.$$
Lemma~\ref{lem:size} gives $m_0\le\sqrt{6N}$ when $A$ is nonempty;
this inequality is also immediate when $A=\varnothing$. At every stage
we therefore have
$$m_j\le q^j m_0\le q^j\sqrt{6N}.$$
Set
$$J=\left\lceil\frac{\log(\sqrt{6N}/R)}{\log(5/4)}\right\rceil.$$
Since $q^J\sqrt{6N}\le R$, the process stops after some
$\ell\le J=O(\log N)$ stages at a remainder $T=A^{(\ell)}$ of size
at most $R$. The record consists of
$$\bigl((m_0,b_0,\ldots,b_{\ell-1}),
(B_0,\ldots,B_{\ell-1}),T\bigr).$$
The recorded sets satisfy
$$A=T\cup\bigcup_{j=0}^{\ell-1}B_j.$$
Thus the record determines $A$ uniquely. The fixed choice rule assigns
one record to each $A$, and two different sets cannot have the same
record. This gives an injection from admissible sets to records.
We bound the number of records by multiplying three bounds: the number
of possible size tuples, a uniform bound for the number of ordered tuples
of extracted subsets for each fixed size tuple, and the number of possible
remainders.

First, for fixed $\ell$, the size tuple $(m_0,b_0,\ldots,b_{\ell-1})$
has $\ell+1$ integer entries, each between $0$ and $N$. Allowing every
entry to vary independently gives at most $(N+1)^{\ell+1}$ choices;
restrictions between the entries can only reduce this number. Since
$0\le\ell\le J$, the total number of size tuples is at most
$$\sum_{\ell=0}^{J}(N+1)^{\ell+1}
\le(J+1)(N+1)^{J+1}=\exp(O((\log N)^2)).$$
This includes $\ell=0$, when $|A|\le R$, the ordered tuple of extracted
subsets is empty, and $T=A$. In particular, the empty set is included.

Second, fix a size tuple $(m_0,b_0,\ldots,b_{\ell-1})$ that arises
from the extraction procedure. These integers determine every $m_j$
by $m_{j+1}=m_j-b_j$. At each extraction stage,
$N^{2/5}<m_j\le\sqrt{6N}$ and $m_j/5\le b_j\le m_j/2$.
Lemma~\ref{lem:countblock}, with $K=256N/m_j^2$, therefore bounds the
number of possible blocks $B_j\subseteq[N]$ by $\exp(f_N(m_j))$.
Counting these subsets independently, without imposing disjointness or
the selection rule, can only increase the upper bound. Hence the number
of possible ordered tuples $(B_0,\ldots,B_{\ell-1})$ is at most
$$\prod_{j=0}^{\ell-1}\exp(f_N(m_j))
=\exp\!\left(\sum_{j=0}^{\ell-1}f_N(m_j)\right),$$
where the empty product for $\ell=0$ equals $1$.

The block exponents have a uniformly bounded geometric sum. The function
$f_N$ is positive on $0<x\le\sqrt{6N}$, and on this interval
$$f_N'(x)=\log\!\left(\frac{256e^2N}{x^2}\right)-2
=\log\!\left(\frac{256N}{x^2}\right)
\ge\log(256/6)>0.$$
Consequently
$$\begin{aligned}
\sum_{j=0}^{\ell-1} f_N(m_j)
&\le\sum_{j=0}^{\infty} f_N(q^j\sqrt{6N})\\
&=\sqrt{6N}\sum_{j=0}^{\infty}q^j
\left(\log\frac{256e^2}{6}+2j\log\frac1q\right)\\
&=\sqrt{6N}\left(
\frac{\log(256e^2/6)}{1-q}
+\frac{2q\log(1/q)}{(1-q)^2}\right)
=C_1\sqrt N,
\end{aligned}$$
where $C_1>0$ is an absolute constant. Here the extension to an infinite
sum uses positivity of $f_N$. The terms with
$q^j\sqrt{6N}\le N^{2/5}$ are merely evaluations of the function $f_N$;
Lemma~\ref{lem:countblock} is not applied in that range. Thus
$\exp(C_1\sqrt N)$ bounds the number of ordered tuples of extracted
subsets uniformly over all size tuples arising from the procedure.

Third, the remainder $T$ is a subset of $[N]$ of size at most $R$,
so it has at most
$$\sum_{r=0}^{R}\binom Nr\le(R+1)N^R$$
possible values. Combining the three bounds gives
$$a(N)+1\le
\left(\sum_{\ell=0}^{J}(N+1)^{\ell+1}\right)
\exp(C_1\sqrt N)
\left(\sum_{r=0}^{R}\binom Nr\right).$$
Since
$$\log\bigl((R+1)N^R\bigr)
=O(N^{2/5}\log N)=o(\sqrt N),
\qquad (\log N)^2=o(\sqrt N),$$
the number of all admissible subsets of $[N]$, including the empty set,
is at most $\exp(C\sqrt N)$ for an absolute constant $C$.

If $4\le N<N_0$, where $N_0$ is the fixed absolute threshold above,
the trivial bound $a(N)\le2^N$ gives
$\log a(N)\le(\log2)\sqrt{N_0}\sqrt N$. Enlarging $C$ therefore covers
every $N\ge4$. Since $b(N)\le a(N)$,
the same upper bound holds for $b(N)$.
\end{proof}

\begin{proof}[Proof of the lower bound]
	
Let $L=\lfloor\sqrt N\rfloor$ and
$$I=\{N-L+1,\ldots,N\}.$$
For $0\le k<L$, every $k$-term sum from $I$ is at most $kN$, whereas every
$(k+1)$-term sum is at least $(k+1)(N-L+1)$. Their difference is positive,
because
$$ (k+1)(N-L+1)-kN
=N-(k+1)(L-1)
\ge N-L(L-1)\ge L>0,$$
where we used $N\ge L^2$ and $L\ge2$.
Every layer is nonempty, and its largest element is smaller than the
smallest element of the next layer. By transitivity, any two different
layers are disjoint. Hence $I$ is admissible.
Every subset of $I$ is admissible as well. There are $2^L-1$ nonempty
subsets, and $2^{L-1}$ subsets containing $N$. Hence
$$a(N)\ge2^L-1,\qquad b(N)\ge2^{L-1}.$$
For $N\ge4$, we have $L\ge2$ and
$\sqrt N<L+1\le4(L-1)$, so $L-1\ge\sqrt N/4$.
Since $2^L-1\ge2^{L-1}$, both counts are at least
$\exp((\log2)\sqrt N/4)$. Thus both lower bounds hold with
$c=(\log2)/4$.
\end{proof}


\begin{thebibliography}{9}

\bibitem{Campos}
M.~Campos,
\emph{On the number of sets with a given doubling constant},
Israel J. Math. \textbf{236} (2020), 711--726.

\bibitem{DF}
J.-M.~Deshouillers and G.~A.~Freiman,
\emph{On an additive problem of Erd\H{o}s and Straus, 2},
Ast\'erisque \textbf{258} (1999), 141--148.

\bibitem{ENS}
P.~Erd\H{o}s, J.-L.~Nicolas and A.~S\'ark\"ozy,
\emph{Sommes de sous-ensembles},
J. Th\'eor. Nombres Bordeaux \textbf{3} (1991), 55--72.

\bibitem{GM}
B.~Green and R.~Morris,
\emph{Counting sets with small sumset and applications},
Combinatorica \textbf{36} (2016), 129--159.

\bibitem{LMS}
D.~Liu, L.~Mattos and T.~Szab\'o,
\emph{On the number of sets with small sumset},
Israel J. Math., to appear.

\bibitem{Nicolas}
J.-L.~Nicolas,
\emph{Stratified sets},
Ast\'erisque \textbf{258} (1999), 205--215.

\end{thebibliography}
\end{document}